\documentclass[11pt,reqno]{amsart}

\usepackage{amsmath,amssymb,amsthm}
\usepackage{enumitem}
\usepackage{tikz}
\usetikzlibrary{arrows.meta}
\usepackage[margin=1in]{geometry}
\usepackage[hidelinks]{hyperref}

\numberwithin{equation}{section}

\theoremstyle{plain}
\newtheorem{theorem}{Theorem}[section]
\newtheorem{lemma}[theorem]{Lemma}
\newtheorem{proposition}[theorem]{Proposition}
\newtheorem{corollary}[theorem]{Corollary}
\newtheorem{conjecture}[theorem]{Conjecture}

\theoremstyle{definition}
\newtheorem{definition}[theorem]{Definition}
\newtheorem{construction}[theorem]{Construction}

\newcommand{\Kk}{K_k}
\newcommand{\mK}{m_K}
\newcommand{\mA}{m_A}
\newcommand{\mCr}{m_{\mathrm{cr}}}
\newcommand{\sA}{s_A}
\newcommand{\sK}{s_K}
\newcommand{\Kp}{K'}

\title[Antimagic orientations of graphs with a dominating clique]{Antimagic orientations of
graphs with a dominating clique}
\author{Abdulrahman Al-Taweel}
\address{Carnegie Mellon University Qatar, Doha, Qatar}
\date{}

\begin{document}

\begin{abstract}
A graph $G$ with $m$ edges has an \emph{antimagic orientation} if its edges can be oriented and
bijectively labelled $1,\dots,m$ so that the oriented vertex sums, the labels on in-arcs minus the
labels on out-arcs, are pairwise distinct. Hefetz, M\"utze and Schwartz conjectured that every connected graph
admits an antimagic orientation. We prove that this conjecture holds for every graph with a
dominating clique.
\end{abstract}

\maketitle
\pagestyle{plain}

\section{Introduction}\label{sec:intro}

An \emph{antimagic labelling} of a graph $G$ with $m$ edges is a bijection from $E(G)$ to
$\{1,\dots,m\}$ such that the vertex sums, each of which is the sum of the labels on its incident
edges, are pairwise distinct; $G$ is \emph{antimagic} if it admits one. Hartsfield and Ringel
\cite{HartsfieldRingel1990} introduced antimagic labellings in 1990 and made the following
conjecture.

\begin{conjecture}\label{conj:hr}
Every connected graph other than $K_2$ is antimagic.
\end{conjecture}

\noindent This conjecture is open in general, though it is proven to hold for many structured
families; see the dynamic survey of Gallian \cite{GallianDS6}.

Hefetz, M\"utze and Schwartz \cite{HefetzMutzeSchwartz2010} introduced the directed analogue. An
\emph{antimagic orientation} of $G$ is an orientation $D$ of $G$ together with a bijection
$\tau\colon A(D)\to\{1,\dots,m\}$ such that the \emph{oriented vertex sums}
\begin{equation}\label{eq:vsum}
s(v)\;=\;\sum_{\text{arcs }(u,v)\in D}\tau(uv)\;-\;\sum_{\text{arcs }(v,w)\in D}\tau(vw)
\end{equation}
are pairwise distinct. Motivated by Conjecture~\ref{conj:hr}, they proposed the following
\cite[Conjecture 4.2]{HefetzMutzeSchwartz2010}.

\begin{conjecture}\label{conj:hms}
Every connected graph admits an antimagic orientation.
\end{conjecture}

\noindent This conjecture has been verified for several classes, among them even regular graphs
\cite{LiSongWangYangZhang2017}, biregular bipartite graphs \cite{ShanYu2017Biregular}, and graphs of
large maximum degree \cite{YangCarlsonOwensPerrySinggihSongZhangZhang2019}, but remains open in
general.

A \emph{dominating clique} of $G$ is a clique $K$ such that every vertex of $G$ lies in $K$ or has a
neighbour in $K$. The antimagicness of graphs with a dominating clique belongs to a longer line of
work, all in the unoriented setting, on graphs built around a dominating substructure. Barrus
\cite{Barrus2010Canonical} used the canonical decomposition of graphs to establish antimagicness for
connected split graphs (a clique together with an independent set); Sl\'iva \cite{Sliva2012Regular} proved
antimagicness for a class of graphs with a regular dominating subgraph; and Yilma
\cite{Yilma2013LargeMaxDegree}, in a study of graphs with large maximum degree, established
antimagicness for a family of graphs with a dominating $K_2$. Specialising the dominating structure to a clique,
Beaudoire, Bentz and Picouleau \cite{BeaudoireBentzPicouleau2025DomClique} proved the following.

\begin{theorem}[{\cite[Theorem~4]{BeaudoireBentzPicouleau2025DomClique}}]\label{thm:bbp}
Let $G$ have a dominating clique $K$ of order at least four such that every vertex $v\notin K$
satisfies $\deg(v)\le\min_{u\in K}\deg(u)$. Then $G$ is antimagic.
\end{theorem}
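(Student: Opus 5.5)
The plan is to build the labelling in layers, with the largest labels on the edges of $K$. Let $K=\{u_1,\dots,u_k\}$ with $k\ge 4$, and let $W=V(G)\setminus K$. Split $E(G)$ into three classes: the clique edges $E_K$, the cross edges $E_C$ between $K$ and $W$, and the outer edges $E_W$ inside $W$. Since $K$ is dominating, I would fix for each $w\in W$ one \emph{designated} cross edge $e_w=wu$ with $u\in K$. These edges are pairwise distinct, because each has exactly one endpoint in $W$. Let $E_D$ be the set of designated edges. I would assign labels in four blocks, in increasing order:
\begin{enumerate}[label=(\roman*)]
\item $E_W$ together with the non-designated cross edges, labelled arbitrarily;
\item $E_D$;
\item the top $\binom{k}{2}$ labels, placed on $E_K$.
\end{enumerate}

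\emph{Step 1 (distinct sums on $W$).} After block (i), each $w\in W$ has a partial sum $p(w)$, and $e_w$ is its only unlabelled edge. I would order $W$ as $w_1,\dots,w_t$ with $p(w_1)\le\dots\le p(w_t)$. Then give $e_{w_j}$ the $j$-th smallest label of block (ii). The final sums $p(w_j)+\tau(e_{w_j})$ are then strictly increasing in $j$, so all sums on $W$ are distinct. This is the standard ``last edge breaks ties'' trick, and here it needs only that $K$ dominates.

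\emph{Step 2 (distinct sums on $K$).} After blocks (i) and (ii), each $u_i$ has a partial sum $q(u_i)$. Reorder $K$ so that $q(u_1)\le\dots\le q(u_k)$. I would place the top labels on the edges of $K_k$ so that the clique contributions $c(u_i)$ are nondecreasing in $i$ and strictly increasing; this suffices, since then $q(u_i)+c(u_i)$ is strictly increasing. One concrete choice is to order the edges $u_iu_j$ with $i<j$ lexicographically by $(j,i)$ and assign the labels of block (iii) in that order. Then edges with larger endpoints receive larger labels, and a short exchange check shows $c(u_1)<c(u_2)<\dots<c(u_k)$. Alternatively, one can invoke the classical fact that $K_k$ has an antimagic labelling with prescribed increasing vertex order.

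\emph{Step 3 (separating $K$ from $W$).} This is the step I expect to be the main obstacle, and it is where both hypotheses are used: the degree hypothesis $\deg(w)\le\min_{u\in K}\deg(u)$ and $k\ge 4$. It suffices to show $s(w)<s(u)$ for every $w\in W$ and $u\in K$. Each $u\in K$ has at least $k-1\ge 3$ edges carrying labels from the top block. A vertex $w\in W$ has at most $\deg(u)$ edges, all with labels outside the top block, and at most one of them, namely $e_w$, lies in block (ii). The naive comparison pairs the $k-1$ clique labels of $u$ against the $k-1$ largest labels of $w$, and the remaining edges of $u$ against the remaining edges of $w$. This fails when $u$'s non-clique edges are small and $w$'s are large.

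To handle this, I would first try to refine block (i). Order its labels so that edges incident to $K$ (the non-designated cross edges) receive the larger part of block (i), and $E_W$ the smaller part. Then every edge at $u$ other than clique edges is a cross edge and sits high within block (i), while $w$ has its $E_W$ edges low. With this refinement, I would aim for an injection from the edges at $w$ into the edges at $u$ that does not decrease labels. The designated edge $e_w$ would be matched to a clique edge of $u$, which has a larger label. Each other edge of $w$ would be matched either to a cross edge of $u$ or to one of the spare clique edges of $u$. The degree condition guarantees enough edges at $u$ for the injection, and $k\ge4$ supplies spare clique edges whose excess over block (i) absorbs any slack. Strictness should come from $e_w$ versus a clique edge.

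If this bookkeeping breaks in the case where $u$ has few cross edges, I would fall back on ordering block (i) by decreasing number of $K$-neighbours of the outer endpoint. Failing that, I would treat the extremal configuration separately, using the gap between the top block and everything below.
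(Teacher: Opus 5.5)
The paper does not prove this statement; it quotes it from Beaudoire, Bentz and Picouleau, so there is no in-text proof to compare with.

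Your Steps~1 and~2 are sound. Step~1 is the same sorted-last-edge trick as the anchor step of Construction~\ref{con:engine}. Your lexicographic labelling of $E_K$ does give $c(u_1)<\dots<c(u_k)$ for $k\ge3$, since for $a<b$ each edge $u_au_j$ with $j\ne a,b$ precedes $u_bu_j$.

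\textbf{The gap is Step~3.} You leave it as an aim with fallbacks, and the injection you sketch does not work as stated. You allow a non-designated cross edge of $w$ to be matched to a cross edge of $u$. But cross edges within block~(i) are labelled arbitrarily, so that partner may carry a smaller label. Moreover, the critical case is not $u$ having few cross edges; it is $w$ being adjacent to all of $K$. Then $w$ has $k$ cross edges that all need larger partners, while $u$ has only $k-1$ clique edges. There are no spare clique edges left whose excess could absorb anything, and $k\ge4$ does not change this count.

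\textbf{The missing idea} is that the edge $wu$ lies at both vertices and can be matched to itself. Use only your refinement, which makes every cross label exceed every label on $E_W$, and define an injection $\phi$ from the edges at $w$ to the edges at $u$ as follows.
\begin{itemize}
\item If $w$ has $c_w\le k-1$ neighbours in $K$: send its cross edges into the $k-1$ clique edges at $u$. Send its $E_W$ edges into the unused clique edges together with the cross edges at $u$. This fits because $\deg(w)-c_w\le\deg(u)-c_w$.
\item If $c_w=k$: set $\phi(wu)=wu$. Send the other $k-1$ cross edges at $w$ onto the clique edges at $u$. Send the $\deg(w)-k\le\deg(u)-k$ edges of $E_W$ at $w$ into the remaining cross edges at $u$.
\end{itemize}
In both cases $\tau(\phi(e))\ge\tau(e)$ for every edge $e$ at $w$. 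The inequality is strict whenever $\phi(e)$ is a clique edge, and this occurs at least once because $w$ has a neighbour in $K$. The unused edges at $u$ only add to $s(u)$, so $s(w)<s(u)$.

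With this step supplied, your three steps form a complete proof. Only $k\ge3$ (in Step~2) is actually used.
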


\noindent Our main result is the following.

\begin{theorem}\label{thm:full}
Every graph $G$ with a dominating clique admits an antimagic orientation.
\end{theorem}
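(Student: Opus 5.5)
We argue by a direct construction that gives the vertices outside the dominating clique sums of one sign and the clique vertices sums of the other, and then separates the vertices within each group. Let $K=\Kk$ be a dominating clique, with $k=|K|$, and let $A=V(G)\setminus K$. Note that $G$ is connected. We may assume $k\ge 2$ and $A\neq\emptyset$. Indeed, if $k=1$ then $G$ has a dominating vertex and hence large maximum degree, which is covered by \cite{YangCarlsonOwensPerrySinggihSongZhangZhang2019}; if $A=\emptyset$ then $G=K_k$ and an explicit orientation is easy. For each $a\in A$ we fix one \emph{anchor} edge $a\,c(a)$ with $c(a)\in K$. We split the edges into three classes: the anchor edges (there are $|A|$ of them), the edges inside $K$ (there are $\mK=\binom{k}{2}$ of them), and the remaining edges, which we call cross edges. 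These are the further edges between $A$ and $K$ together with the edges inside $A$; let there be $\mCr$ of them.

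Label the edges of the last two classes with the small labels $1,\dots,\mCr+\mK$. Reserve the largest $|A|$ labels for the anchor edges and orient every anchor edge from $K$ into $A$. Orient every cross edge between $A$ and $K$ into $A$ as well. Orient the edges inside $A$ arbitrarily at first; we repair this below. The aim is that every $a\in A$ ends with $s(a)>0$ and every $u\in K$ ends with $s(u)<0$, so that the two groups cannot collide. For $a\in A$ this is not automatic, because $a$ may have large out-weight from edges inside $A$ whose labels are at most $\mCr$. So choose the labels on the edges inside $A$ to be the smallest ones, and orient them by an acyclic ordering. Then each anchor label, which is larger than all non-anchor labels, gives a positive contribution. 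Summing over all edges inside $A$, this still needs a refinement. The cleanest fix is to instead give the $|A|$ anchor edges labels large enough that the anchor alone exceeds $a$'s total incident small labels. Since anchors use the top labels but $a$ may have degree about $\mCr$, we use instead the standard trick of \cite{HefetzMutzeSchwartz2010}: orient the edges inside $A$ along a fixed ordering $a_1,\dots,a_r$ of $A$ so that sums within $A$ are strictly increasing. We then assign the anchor labels to $a_1,\dots,a_r$ in increasing order. This makes $s(a_1)<\dots<s(a_r)$ once we check that each step increases the sum.

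For the clique, we orient $K$ transitively along an ordering $u_1,\dots,u_k$, with each $u_i\to u_j$ for $i<j$. The contribution of $K$-internal edges to $s(u_j)$ is then governed by a transitive tournament, and we choose the labels on $K$ so that these contributions are strictly increasing in $j$. Every $u\in K$ sends out all of its arcs to $A$, so $s(u)$ is negative and large in absolute value whenever $u$ is the anchor of someone. We order $K$ by the total weight $w(u)$ sent to $A$ and pick the transitive order to agree with it. The internal contribution is increasing and the external term $-w(u)$ is sorted compatibly, so the sums in $K$ become distinct. This may require swapping a bounded number of labels, as in the proof of Theorem~\ref{thm:bbp}.

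The main obstacle is the separation between $A$ and $K$ when $A$ has many internal edges, since then the sums $s(a)$ need not be positive. My first attempt is a different sign convention: orient all $A$–$K$ edges out of $A$ except the anchors, and put all the big labels on $K$-internal and cross edges. If that fails, I will reduce to the case where $A$ is independent. For this I delete edges inside $A$ one at a time, using a relabelling lemma showing that inserting an edge with label $m+1$, oriented toward the endpoint of currently larger sum, preserves distinctness except at that endpoint. The resulting conflicts are then fixed by swapping with an adjacent label. This inductive repair step is where I expect most of the work.
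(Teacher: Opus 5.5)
Your proposal has genuine gaps: it does not settle the step you yourself call the main obstacle, namely that every outside vertex ends with a positive sum, and it also leaves the clique sums unbounded.

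\textbf{Outside sums.} Orienting $G[A]$ acyclically does not work. Take $k=2$, $G[A]=K_r$, and one cross edge per vertex of $A$. In any acyclic orientation, the first two vertices $a_1,a_2$ satisfy $s_A(a_1)+s_A(a_2)=\tau(a_1a_2)-\Sigma$. Here $\Sigma\ge(2r-3)(r-1)$ is the sum of the $2r-3$ labels at $a_1,a_2$, and $\tau(a_1a_2)\le\binom r2+1$. So one of $a_1,a_2$ has inside sum at most roughly $-\tfrac34 r^2$. A single anchor label is at most $m\approx\tfrac12 r^2$ and cannot offset this. The ``standard trick'' you attribute to Hefetz--M\"utze--Schwartz and the inductive relabelling lemma are not established. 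As stated, the relabelling lemma is also false: inserting an arc changes the sums at both endpoints, and a label $m+1$ breaks the block structure you rely on.

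The missing tool is the lemma of Yang et al.\ (Lemma~\ref{lem:floor}): any graph labelled by $a_1<\dots<a_m$ has an orientation with every vertex sum at least $-a_m$. The paper applies it to $G[A]$ with the label block just above the clique labels and below all cross labels, and orients every cross edge into $A$. The anchor alone then gives $s(v)\ge-(m_K+m_A)+(m_K+m_A+1)=1$. Assigning anchors in increasing order of $p(v)=s_A(v)+\mathrm{sp}(v)$ makes the outside sums strictly increasing.

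\textbf{Clique sums.} You need $s(u)<0$ for every $u\in K$, and two things block this.
\begin{enumerate}
\item Nothing guarantees that each $u\in K$ has a neighbour in $A$; such a $u$ has load $0$. The paper removes this case by taking a dominating clique of minimum order, since otherwise $K\setminus\{u\}$ would still dominate.
\item In a transitive tournament the sink $u_k$ has $s_K(u_k)$ equal to a sum of $k-1$ distinct labels, hence at least $m_K$. Your pairing gives $u_k$ the smallest load, which may be a single surplus label from the low block you share with $E(K)$. Then $s(u_k)$ can be positive and collide with an outside sum.
\end{enumerate}
What is needed is a labelling of $K_k$ by $\{1,\dots,m_K\}$ with pairwise distinct sums, all at most $m_K$. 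The paper obtains this from Euler-tour labellings: for odd $k$ it reverses an alternate matching of the tour, and for even $k$ it removes a perfect matching and labels it first. Combined with every cross label exceeding $m_K+m_A$, this gives $s(u)\le-m_A-1$. Your opposite-order pairing of internal sums against loads is the right mechanism for distinctness inside $K$. Without these two bounds, though, the argument remains a strategy sketch rather than a proof.
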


\noindent Theorem~\ref{thm:full} drops both the order bound and the degree hypothesis of
Theorem~\ref{thm:bbp}, and settles Conjecture~\ref{conj:hms} for graphs with a dominating clique $K$, with
no condition on the outside set $A=V(G)\setminus K$; in particular $G[A]$ may be disconnected,
edgeless, or empty.

\section{Preliminaries}\label{sec:defs}

We fix here the notation and the three edge classes used throughout. All
graphs are finite and simple. For $S\subseteq V(G)$, $G[S]$ is the induced subgraph; $\deg(v)$ is
the degree of $v$. A closed walk that uses every edge exactly once is an \emph{Euler tour}; an open
one is an \emph{Euler trail}. Recall that a connected multigraph has an Euler tour if and only if every vertex
has even degree, and an Euler trail if and only if exactly two vertices have odd degree. For an
orientation $D$ of $G$ and a bijection $\tau\colon A(D)\to\{1,\dots,m\}$, the oriented vertex sum
$s(v)$ is given by \eqref{eq:vsum}; $(D,\tau)$ is an antimagic orientation when the values $s(v)$ are
pairwise distinct.

Fix a dominating clique $K$ of order $k$ and set $A=V(G)\setminus K$. We first group $E(G)$ by endpoint location:
\begin{itemize}[topsep=2pt,itemsep=1pt]
\item the \emph{clique edges} $E(K)$, of which there are $\mK=\binom{k}{2}=\tfrac{k(k-1)}{2}$;
\item the \emph{inside-$A$ edges} $E(G[A])$, of which there are $\mA$;
\item the \emph{cross edges} between $K$ and $A$, of which there are $\mCr$.
\end{itemize}
Thus $m=\mK+\mA+\mCr$. Since $K$ dominates, each $v\in A$ has at least one cross edge; designate one
as the \emph{anchor} of $v$ and the others as its \emph{surplus}, so the cross-edge class consists of
$\mCr-|A|$ \emph{surplus cross edges} and $|A|$ \emph{anchor cross edges}.

These four edge sets receive the labels $\{1,\dots,m\}$ in consecutive blocks, from smallest
to largest:
\begin{equation}\label{eq:blocks}
\begin{aligned}
\text{clique edges} &:\quad \{1,\dots,\mK\},\\
\text{inside-$A$ edges} &:\quad \{\mK+1,\dots,\mK+\mA\},\\
\text{surplus cross edges} &:\quad \{\mK+\mA+1,\dots,m-|A|\},\\
\text{anchor cross edges} &:\quad \{m-|A|+1,\dots,m\}.
\end{aligned}
\end{equation}

\begin{definition}\label{def:sigma}
For $u\in K$, let $\sK(u)$ be the \emph{clique-internal sum} of $u$, namely the oriented sum of $u$
computed from the clique edges alone, or equivalently the contribution of $E(K)$ to $s(u)$. Let
$B_u$, the \emph{load} of $u$, be the sum of the cross labels on the arcs directed out of $u$ into
$A$. For $v\in A$, let $\sA(v)$ be the \emph{inside sum} of $v$, namely the oriented partial sum of
$v$ over the inside-$A$ edges alone.
\end{definition}

\begin{figure}[b]
\centering
\begin{tikzpicture}[
  x=1cm,y=1cm,
  v/.style={circle,fill,inner sep=1.6pt},
  ed/.style={gray!65},
  cross/.style={-{Stealth[length=2.4mm]},thick}]
  \node[v] (k0) at (-0.5,1.4){}; \node[v] (k1) at (0.5,1.4){};
  \node[v] (k2) at (-0.5,0.4){}; \node[v] (k3) at (0.5,0.4){};
  \foreach \i/\j in {k0/k1,k0/k2,k0/k3,k1/k2,k1/k3,k2/k3} \draw[ed] (\i)--(\j);
  \node[v] (a0) at (3.3,2.0){}; \node[v] (a1) at (4.2,2.0){};
  \node[v] (a2) at (3.3,-0.5){}; \node[v] (a3) at (4.2,-0.5){};
  \node[v] (a4) at (3.75,0.75){};
  \draw[ed] (a0)--(a1);
  \draw[ed] (a2)--(a3);
  \draw[ed] (a1)--(a4);
  \draw[ed] (a3)--(a4);
  \draw[cross] (k0)--(a0); \draw[cross] (k2)--(a0); \draw[cross] (k1)--(a1);
  \draw[cross] (k3)--(a2); \draw[cross] (k3)--(a3); \draw[cross] (k0)--(a4);
  \node at (0,-1.1){$K$}; \node at (3.75,-1.4){$A=V(G)\setminus K$};
  \node[align=center] at (0,2.9){clique sums\\$s(u)\le-1$};
  \node[align=center] at (3.75,2.9){outside sums\\$s(v)\ge 1$};
  \draw[dashed] (1.8,-1.55)--(1.8,3.4);
\end{tikzpicture}
\caption{Schematic of the sign separation obtained by the construction.}
\label{fig:sign}
\end{figure}

Throughout, every cross edge is oriented from its clique endpoint into its $A$-endpoint. With this
orientation, the label of each cross edge is added to the sum at its $A$-endpoint and
subtracted from the sum at its clique endpoint, so
\begin{equation}\label{eq:su-sv}
s(u)=\sK(u)-B_u\quad(u\in K),
\qquad
s(v)=\sA(v)+\!\!\sum_{\text{cross }e\text{ into }v}\!\!\tau(e)\quad(v\in A).
\end{equation}
In the labelled orientation constructed below, every clique vertex has a negative sum and every
outside vertex a positive sum, so the two groups are separated by sign (Figure~\ref{fig:sign}). This
sign separation, with the sorted labelling of the two sides, adapts to oriented sums the undirected
argument of Beaudoire, Bentz and Picouleau \cite{BeaudoireBentzPicouleau2025DomClique}; the lower
bound on the inside sums and the Euler-tour orientations underlying the clique interior are drawn
from Yang et al.\ \cite{YangCarlsonOwensPerrySinggihSongZhangZhang2019}.

Corollary~\ref{cor:floor} gives a lower bound on the inside sums $\sA(v)$, and
Lemma~\ref{lem:clique} gives pairwise distinct clique-internal sums $\sK(u)$ with a uniform upper
bound.

\section{A lower bound via Euler tours}\label{sec:floor}

\begin{lemma}[{Yang et al.\ \cite[Lemma 2.1]{YangCarlsonOwensPerrySinggihSongZhangZhang2019}}]\label{lem:floor}
Let $H$ be a graph with $m\ge1$ edges, and let $a_1<a_2<\dots<a_m$ be positive integers. Then there
exist an orientation $D$ of $H$ and a bijection $\tau\colon A(D)\to\{a_1,\dots,a_m\}$ with
$s(v)\ge-a_m$ for every $v\in V(H)$.
\end{lemma}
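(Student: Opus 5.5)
The plan is to reduce to connected graphs with at least one edge and then orient along an Euler tour, labelling the edges in \emph{decreasing} order along the tour. With this labelling, each passage through a vertex contributes a positive amount to its oriented sum. Only a single ``boundary'' passage per vertex can contribute something negative, and that contribution is at least $-a_m$.

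\emph{Reduction.} Isolated vertices have $s(v)=0\ge -a_m$. Let the components of $H$ that contain edges be $H_1,\dots,H_r$. Split $\{a_1,\dots,a_m\}$ arbitrarily into sets of sizes $|E(H_1)|,\dots,|E(H_r)|$. Treat each component separately with its own label set, whose largest element is at most $a_m$. A bound of minus the component's largest label then yields $s(v)\ge -a_m$. So it suffices to prove the lemma for connected $H$, which we now assume.

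\emph{Eulerian case.} Suppose every vertex of $H$ has even degree. Take an Euler tour $v_0e_1v_1e_2\cdots e_mv_0$, orient each $e_i$ in the direction of traversal, and set $\tau(e_i)=a_{m+1-i}$, so labels decrease along the tour. Every passage through a vertex $v$ via $e_i$ (in) and $e_{i+1}$ (out) contributes $\tau(e_i)-\tau(e_{i+1})>0$. The only exception is the wrap-around passage at $v_0$, with in-arc $e_m$ and out-arc $e_1$, which contributes $a_1-a_m>-a_m$. Every edge at $v$ occurs in exactly one passage, so $s(v)$ is the sum of these contributions. Hence $s(v)>0$ for $v\ne v_0$ and $s(v_0)\ge a_1-a_m\ge -a_m$.

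\emph{General case.} Let $O$ be the set of odd-degree vertices, which has even size. Add a new vertex $w$ joined by a dummy edge to each vertex of $O$. The resulting multigraph $H^+$ is connected, since $H$ is, and all its degrees are even. Take an Euler tour of $H^+$ starting and ending at $w$, and orient all edges along it. List the real edges of $H$ in tour order and assign them $a_m,a_{m-1},\dots,a_1$ in that order, skipping dummy edges. Now consider a real vertex $v$. Its passages never wrap around, because the tour begins at $w$. A passage using two real edges contributes in-label minus out-label, which is positive because the in-edge comes earlier in the tour. If $v\in O$, exactly one passage uses the unique dummy edge at $v$. That passage contributes either $+\tau(e)>0$ or $-\tau(e)\ge -a_m$, where $e$ is the real edge of that passage. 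All other passages are positive, so $s(v)\ge -a_m$. Deleting $w$ and the dummy edges leaves an orientation of $H$ with the required bijection.

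The only delicate step is this bookkeeping at vertices incident to a dummy edge, together with the choice of $w$ as the start of the tour so that no real vertex suffers a wrap-around term. Once labels are taken decreasing along the tour, the remaining checks are immediate.
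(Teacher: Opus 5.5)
Your proof is correct and complete. The reduction to connected $H$ works, and so does the separate Eulerian case, which you need because the auxiliary vertex $w$ would be isolated when $O=\emptyset$. The bookkeeping at a vertex of $O$ is also right: its unique dummy edge forces the other edge of that passage to be real, so the passage contributes $\pm\tau(e)\ge -a_m$.

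The paper gives no proof of this lemma of its own and simply cites Yang et al., so there is no internal argument to compare against. Your decreasing-labels argument is the same passage-pairing mechanism as the paper's Euler-tour calculation in Section~\ref{sec:clique}. The only difference is that your labels decrease along the tour, so every non-boundary passage is positive rather than contributing $-1$.
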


\begin{corollary}\label{cor:floor}
For an arbitrary $G[A]$, there exist an orientation of the inside-$A$ edges and a labelling of them by
$\{\mK+1,\dots,\mK+\mA\}$ such that $\sA(v)\ge-(\mK+\mA)$ for every $v\in A$. If $\mA=0$, then
$\sA(v)=0$ for every $v\in A$.
\end{corollary}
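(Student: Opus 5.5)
The plan is to apply Lemma~\ref{lem:floor} directly to the graph $H=G[A]$ with the prescribed label set, and to treat the degenerate case $\mA=0$ separately.

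\emph{Case $\mA\ge1$.} Take $H=G[A]$, which has $\mA\ge1$ edges, and the positive integers $a_i=\mK+i$ for $1\le i\le \mA$. These satisfy $a_1<\dots<a_{\mA}$, and $a_{\mA}=\mK+\mA$. Lemma~\ref{lem:floor} then gives an orientation $D$ of $G[A]$ and a bijection $\tau$ from $A(D)$ onto $\{\mK+1,\dots,\mK+\mA\}$ with oriented sum at least $-a_{\mA}=-(\mK+\mA)$ at every vertex of $H$. Every $v\in A$ is a vertex of $H$; an isolated vertex of $G[A]$ simply has sum $0$, which still meets the bound.

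By Definition~\ref{def:sigma}, the oriented sum of $v$ in $H$ is precisely $\sA(v)$, the partial sum over inside-$A$ edges. This identification gives the bound $\sA(v)\ge-(\mK+\mA)$ for all $v\in A$. The labels used are exactly the inside-$A$ block in \eqref{eq:blocks}, so the construction is consistent with the global labelling scheme.

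\emph{Case $\mA=0$.} Here there are no inside-$A$ edges, so $\sA(v)$ is an empty sum and equals $0$ for every $v\in A$. This also covers $A=\emptyset$, where the statement is vacuous. Lemma~\ref{lem:floor} requires $m\ge1$, which is why this case must be handled separately.

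There is no real obstacle. The only points needing care are checking that $H$ meets the hypothesis of Lemma~\ref{lem:floor} (at least one edge, labels positive and strictly increasing) and confirming that isolated vertices of $G[A]$ are covered.
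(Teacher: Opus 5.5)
Your proposal is correct and matches the paper's proof: apply Lemma~\ref{lem:floor} to $H=G[A]$ with label set $\{\mK+1,\dots,\mK+\mA\}$, whose largest element is $\mK+\mA$, and treat $\mA=0$ as the empty labelling. Your extra remarks (positivity of the labels, and isolated vertices of $G[A]$ having sum $0$) are harmless additional checks.
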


\begin{proof}
Apply Lemma~\ref{lem:floor} to $H=G[A]$ with $\{a_1,\dots,a_{\mA}\}=\{\mK+1,\dots,\mK+\mA\}$, whose
largest element is $\mK+\mA$ when $\mA\ge1$; the case $\mA=0$ is the empty labelling.
\end{proof}

\section{The clique interior}\label{sec:clique}

We orient and label the clique edges by the smallest block $\{1,\dots,\mK\}$, identifying the
dominating clique $K$ with $\Kk$ on vertex set $\{v_0,v_1,\dots,v_{k-1}\}$.

\begin{lemma}\label{lem:clique}
For every integer $k\ge1$ there exist an orientation $D$ of $\Kk$ and a bijection
$\tau\colon A(D)\to\{1,\dots,\mK\}$ such that the $k$ oriented sums $\sK(u)$ are pairwise distinct
and
\[
\sK(u)\;\le\;\mK\qquad\text{for every }u.
\]
Moreover, the multiset $\{\sK(u)\}$ may be assigned to the vertices of $\Kk$ in any prescribed order.
\end{lemma}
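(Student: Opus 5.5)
The last clause of Lemma~\ref{lem:clique} is free: $\Kk$ is vertex-transitive, so once a labelled orientation with pairwise distinct sums is found, any bijection $V(\Kk)\to V(\Kk)$ carries it to one in which the multiset $\{\sK(u)\}$ sits on the vertices in any prescribed order. So the real content is a single labelled orientation of $\Kk$ with pairwise distinct sums, all at most $\mK$. The cases $k\le2$ are immediate: for $k=1$ there are no edges and the single sum is $0$; for $k=2$ the sums are $\pm1$. I plan an explicit layered (transitive) construction for $k\ge3$, guided by one numerical coincidence: $1+2+\dots+(k-1)=\binom{k}{2}=\mK$. A vertex receiving exactly the labels $1,\dots,k-1$ therefore has sum exactly $\mK$, the extreme allowed value.

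The construction orients every edge $v_jv_i$ with $i<j$ from $v_j$ to $v_i$, so $v_0$ is a sink and $v_{k-1}$ a source. The labels are assigned in rounds: round $i$ labels the $k-1-i$ arcs entering $v_i$ from higher-indexed vertices. Round $0$ uses $1,\dots,k-1$, giving $\sK(v_0)=\mK$ on the nose. For the later rounds I first try the consecutive blocks in order, but I expect to have to adjust which labels go to which in-arc in each round (see the last paragraph). Writing $\mathrm{in}(v_i)$ for the sum of round-$i$ labels and $\mathrm{out}(v_i)$ for the sum of the labels on the $i$ arcs from $v_i$ to $v_0,\dots,v_{i-1}$, we have $\sK(v_i)=\mathrm{in}(v_i)-\mathrm{out}(v_i)$. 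The source satisfies $\sK(v_{k-1})=-\mathrm{out}(v_{k-1})<0$.

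For distinctness, I aim to show that $\sK(v_0)>\sK(v_1)>\dots>\sK(v_{k-1})$. Going from $v_i$ to $v_{i+1}$, the vertex loses one in-arc and gains one out-arc. I would choose the position of each label inside its round so that $\mathrm{out}(v_i)$ is increasing in $i$ while $\mathrm{in}(v_i)$ is nonincreasing. A natural candidate is to give the arc from $v_j$ into $v_i$ the label that is $(j-i)$-th smallest in round $i$, so that higher-indexed vertices always send larger labels. The monotonicity then reduces to comparing two sums of $k-1-i$ and $k-2-i$ consecutive-ish integers, which I expect to be a routine check.

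The main obstacle is the upper bound $\sK(v_i)\le\mK$ for $i\ge1$. With naive consecutive blocks, $\mathrm{in}(v_1)$ consists of $k-2$ labels near $k,\dots,2k-3$, and $\mathrm{out}(v_1)$ alone is too small to pull the sum below $\mK$. So the plan is to choose the rounds in the order that makes the sums decrease, which would give $\sK(v_i)\le\sK(v_0)=\mK$ for free. If that fails, I would interleave the rounds instead of using contiguous blocks, for instance by letting the later, smaller rounds take the smallest remaining labels while each out-arc of $v_i$ carries a comparatively large label. Failing both, I would fall back on induction on $k$: add a new source vertex whose $k-1$ out-arcs carry the largest labels $\binom{k-1}{2}+1,\dots,\mK$. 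Its sum is then the most negative, hence distinct. The old vertices' sums each rise by one new label, and the bound must be re-verified using the induction hypothesis together with the fact that the old vertex with the largest sum receives the smallest new label.
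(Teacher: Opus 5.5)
Your handling of the last clause (every permutation of $V(K_k)$ is an automorphism) and of $k\le2$ is fine, and for $k=3$ a transitive triangle does work; the paper uses one there too. For $k\ge4$, however, your main construction fails for \emph{every} distribution of labels, so reordering or interleaving the rounds cannot rescue it. The obstruction you observed with consecutive blocks is forced by the orientation itself. The sink $v_0$ has all $k-1$ arcs incoming, so $s_K(v_0)\le m_K=1+2+\dots+(k-1)$ forces its labels to be exactly $\{1,\dots,k-1\}$. Then $v_1$ has $k-2$ in-arcs with distinct labels at least $k$ and a single out-arc with label at most $k-1$, so
\[
s_K(v_1)\;\ge\;\sum_{j=k}^{2k-3}j-(k-1)\;=\;\frac{(k-1)(3k-8)}{2}.
\]
This exceeds $m_K=k(k-1)/2$ once $k\ge5$. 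For $k=4$ the minimum $4+5-3=6$ equals $s_K(v_0)$, so distinctness fails instead.

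Your induction fallback fails for a related reason. Every valid labelled $K_3$ is a transitive triangle whose sink carries $\{1,2\}$ and has sum $3$, because a cyclic triangle never has distinct sums. Adding a source with labels $4,5,6$ lifts that vertex to at least $7>6$. In general, write the old sums in increasing order as $\sigma_1<\dots<\sigma_{k-1}$. The step needs $\sigma_i\le i-1$, which is far stronger than $\sigma_i\le\binom{k-1}{2}$, and this cannot be carried forward. The old sums add to $0$, so after the step some vertex has sum at least $\binom{k-1}{2}+1>k-1$. Starting from $K_1$, repeated source-addition simply rebuilds the transitive tournament.

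The missing idea is a balanced orientation. Orient an Eulerian graph along a closed Euler tour and label consecutively. Then every vertex other than the start gets sum exactly $-\deg/2$. Only the start (the seam) gets the large value $m'-\deg/2$, and this stays at most $m_K$. The paper then separates the tied non-seam sums with a matching.
\begin{itemize}
\item For odd $k=2d+1$, it begins the tour with a Hamilton path and reverses the path's alternate edges (labels $2,4,\dots,2d$). This gives sums $-d\pm4(i+1)$ for $0\le i\le d-1$, and $m_K-d$ at the seam.
\item For even $k=2d+2$, it tours the $2d$-regular graph $K_k-M$ with labels $d+2,\dots,m_K$ and gives the perfect matching $M$ the labels $1,\dots,d+1$. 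This gives sums $-d\pm(i+1)$ for $0\le i\le d-1$, together with $-(2d+1)$ and $m_K-d$.
\end{itemize}
Without a near-balanced structure of this kind, the small labels get used up at a high-in-degree vertex and the bound breaks as shown above.
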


The constructions below use the following Euler-tour calculation. On a connected even-degree graph
with $m'$ edges, orient the edges along a closed Euler tour and label them consecutively
$L+1,L+2,\dots,L+m'$; then each non-start vertex $v$ has oriented sum $-\tfrac{\deg(v)}{2}$ and the
start vertex $z$, the \emph{seam}, has $m'-\tfrac{\deg(z)}{2}$, independently of $L$, since each
non-wrap-around passage pairs an incoming edge labelled $\ell$ with the following outgoing edge
labelled $\ell+1$ and contributes $-1$ while the wrap-around at $z$ contributes $m'-1$.

We prove Lemma~\ref{lem:clique} in three ranges: odd $k\ge5$, even $k\ge4$, and $k\le3$.

\subsection{Odd \texorpdfstring{$k\ge5$}{k>=5}}\label{sub:odd}

Write $k=2d+1$ with $d=\tfrac{k-1}{2}\ge2$, and $\mK=kd$. Each vertex of $\Kk$ has even degree
$k-1=2d$, so $\Kk$ is Eulerian.

\begin{construction}[odd $k$]\label{con:odd}
Take vertices $\{v_0,v_1,\dots,v_{k-1}\}$ and set the seam $z:=v_{k-1}$.
\begin{enumerate}[label=\textbf{Step \arabic*.},leftmargin=*,topsep=2pt,itemsep=2pt]
\item Let $P=(z,v_0,v_1,\dots,v_{k-2})$ be the Hamilton path with edges $zv_0$, $v_0v_1$,
$v_1v_2,\dots,v_{k-3}v_{k-2}$, which has $k-1=2d$ edges. In $\Kk-E(P)$ every vertex has even degree
except the two ends $v_{k-2}$ and $z$, and the graph is connected (after removing a Hamilton path from
a clique on $k\ge5$ vertices, any two vertices are adjacent or have a common neighbour). Hence
$\Kk-E(P)$ has an Euler trail $Q$
from $v_{k-2}$ to $z$. Concatenating $P$ and $Q$ gives a closed Euler tour $\mathcal T$ of $\Kk$ at $z$.
\item Label the edges $1,\dots,\mK$ along $\mathcal T$ and orient each
along the traversal. The first $2d$ labels fall on $P$; the alternate edges $v_{2i}v_{2i+1}$ of $P$
receive labels $2,4,\dots,2d$, since edge $v_{2i}v_{2i+1}$ is the $(2i+2)$-th edge of $\mathcal T$. The
Euler-tour calculation above (with $L=0$ and degree $2d$) gives the tour sums
$\tilde{s}(z)=\mK-d$ and $\tilde{s}(v)=-d$ for $v\ne z$.
\item Let $M=\{v_{2i}v_{2i+1}:i=0,\dots,d-1\}$ be the
alternate-edge matching of $P$; it covers $v_0,\dots,v_{k-2}$ and misses $z$, and its labels are
$\{2,4,\dots,2d\}$, each oriented $v_{2i}\to v_{2i+1}$. Reverse every edge of $M$; the resulting
orientation and labelling form the final $(D,\tau)$.
\end{enumerate}
\end{construction}

\begin{proof}[Proof of Lemma~\ref{lem:clique}, odd $k\ge5$]
Let $a_i:=2(i+1)$ be the label of the reversed edge $v_{2i}v_{2i+1}$. Reversing an arc labelled $a$
raises its tail's sum by $2a$ and lowers its head's sum by $2a$; hence $v_{2i}$ gains $+2a_i$ and
$v_{2i+1}$ loses $2a_i$. The seam $z$ is not incident with an edge of $M$, so its sum is unchanged. Applying these changes to the
Step~2 tour sums $\tilde{s}$ gives
\begin{equation}\label{eq:odd-sums}
\sK(z)=\mK-d,\qquad \sK(v_{2i})=-d+4(i+1),\qquad \sK(v_{2i+1})=-d-4(i+1)\quad(i=0,\dots,d-1).
\end{equation}

\smallskip
\noindent\emph{Distinctness.} Put $S^{+}=\{-d+4(i+1):0\le i\le d-1\}$ and $S^{-}=\{-d-4(i+1):0\le i\le d-1\}$.
Within $S^{+}$ the values strictly increase in $i$; within $S^{-}$ they strictly decrease; so each
block has $d$ distinct values. Every element of $S^{+}$ exceeds $-d$ and every element of $S^{-}$ is
below $-d$, so they are disjoint and neither contains $-d$; the $2d$ matched sums are pairwise
distinct. Finally $\sK(z)=2d^2$ while $\max S^{+}=3d$, and $2d^2>3d$ for $d\ge2$, so $\sK(z)$ is
distinct from all.

\smallskip
\noindent\emph{Bound.} By \eqref{eq:odd-sums}, $\sK(z)=\mK-d$ and $\sK(u)\le3d$ for $u\ne z$; both
are at most $\mK$, so $\sK(u)\le\mK$.
\end{proof}

\subsection{Even \texorpdfstring{$k\ge4$}{k>=4}}\label{sub:even}

Write $k=2d+2$ with $d=\tfrac{k-2}{2}\ge1$, and $\mK=(d+1)(2d+1)$. Each vertex of $\Kk$ has odd
degree $k-1=2d+1$, so $\Kk$ is not Eulerian; removing a
perfect matching makes every degree even.

\begin{construction}[even $k$]\label{con:even}
Take vertices $\{v_0,\dots,v_{2d+1}\}$ and set the seam $z:=v_{2d+1}$.
\begin{enumerate}[label=\textbf{Step \arabic*.},leftmargin=*,topsep=2pt,itemsep=2pt]
\item Let $M=\{v_{2i}v_{2i+1}:i=0,\dots,d\}$ be the perfect matching pairing $v_0$--$v_1,\dots,v_{2d}$--$v_{2d+1}$;
it has $d+1$ edges and covers all vertices. Then $\Kk-M$ is $2d$-regular and connected (a matched pair shares all $2d\ge2$ other vertices as common neighbours), hence Eulerian,
with $m'=\mK-(d+1)=2d(d+1)$ edges.
\item Take a closed Euler tour of $\Kk-M$ at $z$, orient along it,
and label the edges of $\Kk-M$ with $\{d+2,\dots,\mK\}$ in traversal order. The Euler-tour
calculation above (with $L=d+1$ and degree $2d$) gives the tour sums
$\tilde{s}(z)=\mK-2d-1$ and $\tilde{s}(v)=-d$ for $v\ne z$.
\item Orient each matching edge $v_{2i}\to v_{2i+1}$ and label
it $i+1$, so $M$ receives $\{1,\dots,d+1\}$.
\end{enumerate}
\end{construction}

\begin{proof}[Proof of Lemma~\ref{lem:clique}, even $k\ge4$]
The matching $M$ receives $\{1,\dots,d+1\}$ and the edges of $\Kk-M$ receive $\{d+2,\dots,\mK\}$;
together these assignments define a bijection onto $\{1,\dots,\mK\}$. Edge $v_{2i}v_{2i+1}$ contributes $-(i+1)$ at $v_{2i}$ and $+(i+1)$ at $v_{2i+1}$;
the seam $z=v_{2d+1}$ is the head of $v_{2d}v_{2d+1}$, gaining $+(d+1)$. Adding these matching
contributions to the tour sums $\tilde{s}$ gives
\begin{equation}\label{eq:even-sums}
\begin{aligned}
&\sK(z)=\mK-d,\qquad \sK(v_{2d})=-(2d+1),\\
&\sK(v_{2i+1})=-d+(i+1),\qquad \sK(v_{2i})=-d-(i+1)\quad(i=0,\dots,d-1),
\end{aligned}
\end{equation}
the seam value being $(\mK-2d-1)+(d+1)=\mK-d$.

\smallskip
\noindent\emph{Distinctness.} By \eqref{eq:even-sums}, the sums $\sK(v_{2i+1})=-d+(i+1)$ strictly
increase in $i$ and lie in $(-d,0]$, while the sums $\sK(v_{2i})=-d-(i+1)$ strictly decrease and lie
in $[-2d,-d)$; no vertex attains $-d$, so these two groups are disjoint. Moreover
$\sK(v_{2d})=-(2d+1)<-2d$ and $\sK(z)=2d^2+2d+1>0$, so all $k$ sums are distinct.

\smallskip
\noindent\emph{Bound.} By \eqref{eq:even-sums}, $\sK(z)=\mK-d$ and every other $\sK(u)\le0$, so
$\sK(u)\le\mK$.
\end{proof}

\subsection{Small \texorpdfstring{$k\le3$}{k<=3}}\label{sub:small}

The cases $k\le3$ lie outside the two constructions above and are checked
directly.

\begin{proposition}\label{prop:small}
Lemma~\ref{lem:clique} holds for $k\in\{1,2,3\}$.
\end{proposition}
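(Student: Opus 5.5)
The plan is a direct verification, one case at a time, of the three conditions in Lemma~\ref{lem:clique}. These are: the $k$ sums $\sK(u)$ are pairwise distinct, each satisfies $\sK(u)\le\mK$, and the multiset can be placed on the vertices in any prescribed order. The last condition is automatic. $\Kk$ is vertex-transitive, so any permutation of the vertex set is an automorphism. Composing an orientation and labelling with that automorphism moves the multiset of sums onto the vertices in any desired order, without changing the labels used or the sum values. So for each $k\le3$ it is enough to exhibit one explicit orientation and labelling.

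For $k=1$ there are no edges, $\mK=0$, and the single sum is $\sK(v_0)=0\le0$. Distinctness is vacuous, and the empty labelling is the required bijection onto the empty set. For $k=2$ we have $\mK=1$. Orient the edge $v_0\to v_1$ with label $1$, giving $\sK(v_0)=-1$ and $\sK(v_1)=1$. These are distinct and both at most $1$.

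For $k=3$ we have $\mK=3$, and this is the only case needing a short search. One cannot simply orient $K_3$ cyclically: the Euler-tour calculation would give sums $-1,-1,2$, which are not distinct. Instead I would use a transitive orientation: $v_0\to v_1$ labelled $1$, $v_1\to v_2$ labelled $2$, and $v_0\to v_2$ labelled $3$. Then
\[
\sK(v_0)=-1-3=-4,\qquad \sK(v_1)=1-2=-1,\qquad \sK(v_2)=2+3=5.
\]
These are distinct, but $5>\mK=3$, so this choice fails the bound. I would then adjust which edge carries the large label. Take $v_0\to v_1$ labelled $3$, $v_1\to v_2$ labelled $1$, and $v_0\to v_2$ labelled $2$. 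This gives $\sK(v_0)=-5$, $\sK(v_1)=3-1=2$, $\sK(v_2)=1+2=3$. These are distinct and all at most $3$, as required.

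The only real obstacle is this $k=3$ case. There the bound $\sK(u)\le\mK$ is tight, and the natural choices (the cyclic orientation, or the transitive one with labels in path order) fail either distinctness or the bound. The fix is to route the largest label into the vertex that has an out-arc, so the sink's sum stays at most $3$. Once the three explicit configurations are written down, each check is a one-line computation.
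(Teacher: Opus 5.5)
Your proof is correct and follows the paper's approach: a direct case-by-case check for $k=1,2,3$, with the prescribed-order clause handled by the automorphisms of $\Kk$. For $k=3$ you also use a transitive tournament with label $3$ on $v_0\to v_1$. Your labelling differs from the paper's only by swapping labels $1$ and $2$ on the other two arcs (the paper has $v_0\to v_2$ labelled $1$ and $v_1\to v_2$ labelled $2$, giving sums $\{-4,1,3\}$ against your $\{-5,2,3\}$), and both satisfy distinctness and $\sK(u)\le 3$.
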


\begin{proof}
By Definition~\ref{def:sigma}, an arc $x\to y$ labelled $a$ adds $-a$ to $\sK(x)$ and $+a$ to
$\sK(y)$. Take the following orientations and labels:
\begin{enumerate}[label=\textup{(\roman*)},topsep=2pt,itemsep=2pt]
\item $k=1$: $\mK=0$, and the single sum is the empty sum $\sK(v_0)=0=\mK$.
\item $k=2$: $\mK=1$; orient the edge $v_0\to v_1$ with label $1$, giving $\{\sK(u)\}=\{-1,1\}$.
\item $k=3$: $\mK=3$; take the transitive tournament with arcs $v_0\to v_1$ (label $3$), $v_0\to v_2$
(label $1$), $v_1\to v_2$ (label $2$), giving $\sK(v_0)=-4$, $\sK(v_1)=1$, $\sK(v_2)=3$, so
$\{\sK(u)\}=\{-4,1,3\}$.
\end{enumerate}
In each case the $k$ sums are pairwise distinct with $\max_u\sK(u)=\mK$.
\end{proof}

Together, Proposition~\ref{prop:small} and Constructions~\ref{con:odd}--\ref{con:even} prove
Lemma~\ref{lem:clique} for all $k\ge1$; the resulting sums may be assigned to the vertices of $\Kk$
in any prescribed order, since every permutation of $V(\Kk)$ is an automorphism of $\Kk$.

\section{The construction and the full theorem}\label{sec:engine}

The construction places no restriction on $G[A]$, which may be disconnected or edgeless. It requires
only that every vertex of $K$ have at least one neighbour in $A$. Each
outside vertex may carry several cross edges, with no bound on their number.

\begin{theorem}\label{thm:engine}
Let $G$ have a dominating clique $K$ of order $k\ge1$ such that every vertex of $K$ has a neighbour
in $A=V(G)\setminus K$. Then
$G$ admits an antimagic orientation.
\end{theorem}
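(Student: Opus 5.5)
The plan is to build the labelled orientation block by block, following the label blocks in \eqref{eq:blocks}. Every cross edge is oriented from $K$ into $A$, so \eqref{eq:su-sv} applies. We then check the separation of Figure~\ref{fig:sign}: every $v\in A$ gets $s(v)\ge1$ and every $u\in K$ gets $s(u)\le-1$. After that it only remains to make the sums distinct within each side, and we do this by sorting. The order of choices matters. The inside-$A$ edges are fixed first. The cross labels are fixed second, using the inside sums. The clique labelling is fixed last, using the loads $B_u$.

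\emph{Step 1 (inside $A$).} Apply Corollary~\ref{cor:floor} to orient the inside-$A$ edges and label them by $\{\mK+1,\dots,\mK+\mA\}$. This gives $\sA(v)\ge-(\mK+\mA)$ for all $v\in A$, and the bound holds trivially when $\mA=0$.

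\emph{Step 2 (cross edges).} Give the surplus cross edges the labels $\{\mK+\mA+1,\dots,m-|A|\}$ in an arbitrary bijective way. For $v\in A$, let $p(v)$ be $\sA(v)$ plus the surplus labels entering $v$. Order $A$ as $w_1,\dots,w_{|A|}$ so that $p(w_1)\le\dots\le p(w_{|A|})$, and give the anchor of $w_j$ the label $m-|A|+j$. Then $s(w_j)=p(w_j)+m-|A|+j$ is strictly increasing in $j$, so the sums on $A$ are pairwise distinct. For positivity, note that surplus labels are positive, so $p(v)\ge\sA(v)$. Hence
\[
s(v)\ge-(\mK+\mA)+(m-|A|+1)=\mCr-|A|+1\ge1,
\]
because each $v\in A$ has its anchor, so $\mCr\ge|A|$.

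\emph{Step 3 (clique).} The loads $B_u$ are now determined. Each $u\in K$ has a neighbour in $A$ by hypothesis, so $u$ carries at least one outgoing cross arc, and every cross label exceeds $\mK+\mA\ge\mK$. Thus $B_u\ge\mK+1$. Take the orientation and labelling of Lemma~\ref{lem:clique}, with distinct sums satisfying $\sK(u)\le\mK$. This already gives $s(u)=\sK(u)-B_u\le-1<1\le s(v)$ for all $u\in K$ and $v\in A$.

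For distinctness inside $K$, use the ``prescribed order'' clause of Lemma~\ref{lem:clique}. Order $K$ as $u_1,\dots,u_k$ with $B_{u_1}\le\dots\le B_{u_k}$, breaking ties arbitrarily. Assign the distinct clique sums so that $\sK(u_1)>\sK(u_2)>\dots>\sK(u_k)$. For $i<j$ we then get $s(u_i)=\sK(u_i)-B_{u_i}>\sK(u_j)-B_{u_j}=s(u_j)$, so all sums on $K$ are distinct.

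The three blocks use disjoint label sets that together cover $\{1,\dots,m\}$, so $\tau$ is a bijection and $(D,\tau)$ is an antimagic orientation. The only delicate point I expect is the dependency order. The anchor sort needs $\sA$ and the surplus labels but not the clique labels. The clique assignment needs the $B_u$, which are fixed only after Step~2. Done in this order, nothing is circular. The hypothesis that each clique vertex has a neighbour in $A$ is used exactly once, to guarantee $B_u>\mK$.
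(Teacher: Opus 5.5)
Your proof is correct and follows the paper's construction essentially step for step. It uses the same block labelling, the same sorting of anchors by $p(v)$ (which gives $s(v)\ge \mCr-|A|+1\ge 1$), and the same opposite pairing of the clique sums against the loads; your weaker bound $B_u\ge\mK+1$ in place of the paper's $B_u\ge\mK+\mA+1$, and your reversed indexing of $K$, are immaterial.
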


\begin{construction}\label{con:engine}
\leavevmode
\begin{enumerate}[label=\textbf{Step \arabic*.},leftmargin=*,topsep=2pt,itemsep=2pt]
\item Orient and label $E(G[A])$ with $\{\mK+1,\dots,\mK+\mA\}$ by
Corollary~\ref{cor:floor}, so $\sA(v)\ge-(\mK+\mA)$ for every $v\in A$; if $\mA=0$, every
$\sA(v)=0$.
\item Orient every cross edge from $K$ into $A$. Give the surplus edges the
lower cross labels in any order and reserve the top $|A|$ cross labels for the anchors. Write
$\mathrm{sp}(v)$ for the sum of the surplus labels at $v$ and set $p(v)=\sA(v)+\mathrm{sp}(v)$. Order
$A$ as $p(v_1)\le\dots\le p(v_{|A|})$ and give $v_j$'s anchor the label $m-|A|+j$. This fixes each
load $B_u$.
\item Sort the clique vertices by load, $B_{u_1}\ge\dots\ge B_{u_k}$. Orient
and label $E(K)$ with $\{1,\dots,\mK\}$ by Lemma~\ref{lem:clique}, so the $k$ sums $\sK(u)$ are
pairwise distinct with $\sK(u)\le\mK$; using the freedom to assign $\{\sK(u)\}$ to the vertices in any
prescribed order, place them so that $\sK(u_1)<\dots<\sK(u_k)$, opposite to the load order.
\end{enumerate}
\end{construction}

\begin{lemma}\label{lem:surplus}
In Construction~\ref{con:engine}, $\mathrm{sp}(v)\ge0$ for every $v\in A$, and the outside sums
satisfy
\[
1\le s(v_1)<s(v_2)<\dots<s(v_{|A|}).
\]
\end{lemma}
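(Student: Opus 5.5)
Two facts drive the argument: every label in play is a positive integer, and the anchor labels are the largest labels and are handed out in the same order as the values $p(v_j)$.

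\emph{Step 1: $\mathrm{sp}(v)\ge0$.} By \eqref{eq:su-sv}, with every cross edge oriented from $K$ into $A$,
\[
s(v)=\sA(v)+\mathrm{sp}(v)+\tau(\text{anchor of }v)=p(v)+\tau(\text{anchor of }v).
\]
Each surplus label lies in $\{\mK+\mA+1,\dots,m-|A|\}$, so it is positive. A vertex with no surplus edge has $\mathrm{sp}(v)=0$ as the empty sum. Hence $\mathrm{sp}(v)\ge0$ for every $v\in A$.

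\emph{Step 2: the lower bound $s(v_1)\ge1$.} For every $v\in A$, Step~1 and Corollary~\ref{cor:floor} give
\[
p(v)\ge\sA(v)\ge-(\mK+\mA).
\]
The smallest anchor label is $m-|A|+1=\mK+\mA+(\mCr-|A|)+1\ge\mK+\mA+1$, because $\mCr\ge|A|$ (each vertex of $A$ has its anchor). Therefore
\[
s(v_1)=p(v_1)+m-|A|+1\ge-(\mK+\mA)+\mK+\mA+1=1.
\]
When $\mA=0$ we have $\sA\equiv0$ and the bound is immediate. Every other vertex of $A$ also satisfies $s(v)\ge1$ by the same computation, but we only need this at $v_1$.

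\emph{Step 3: strict monotonicity.} For $j<|A|$,
\[
s(v_{j+1})-s(v_j)=\bigl(p(v_{j+1})-p(v_j)\bigr)+\bigl((m-|A|+j+1)-(m-|A|+j)\bigr)\ge0+1=1,
\]
because $A$ was ordered so that $p$ is non-decreasing. The strict chain follows, which proves the lemma.

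This is a bookkeeping argument, and the only delicate point is the accounting in Step~2. There, the anchor block must start strictly above $\mK+\mA$ so that it absorbs the worst-case inside sum from Corollary~\ref{cor:floor}. That holds because the surplus block sits between the inside-$A$ block and the anchor block, and it can only push the anchor labels higher.
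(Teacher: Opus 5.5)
Your proposal is correct and follows essentially the same argument as the paper. Positivity of the surplus labels gives $\mathrm{sp}(v)\ge0$; the anchor labels $m-|A|+j$, handed out in $p$-order, give the strict increase; and $\mCr\ge|A|$ puts every anchor label at or above $\mK+\mA+1$, which absorbs the floor $-(\mK+\mA)$ from Corollary~\ref{cor:floor}.
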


\begin{proof}
The surplus labels are positive, so $\mathrm{sp}(v)\ge0$ for every $v$. By \eqref{eq:su-sv},
$s(v_j)=p(v_j)+(m-|A|+j)$, so for $i<j$
\[
s(v_j)-s(v_i)=\bigl(p(v_j)-p(v_i)\bigr)+(j-i)\ge0+1>0,
\]
the first bracket nonnegative because $A$ is ordered so that $p(v_1)\le\dots\le p(v_{|A|})$. For the
lower bound, $p(v_j)=\sA(v_j)+\mathrm{sp}(v_j)\ge-(\mK+\mA)$ by Corollary~\ref{cor:floor}, while
$\mCr\ge|A|$ gives $m-|A|=\mK+\mA+(\mCr-|A|)\ge\mK+\mA$, so the anchor label of $v_j$ satisfies
$m-|A|+j\ge\mK+\mA+1$. Hence $s(v_1)\ge-(\mK+\mA)+(\mK+\mA+1)=1$.
\end{proof}

\begin{proof}[Proof of Theorem~\ref{thm:engine}]
Since every vertex of $K$ has a neighbour in $A$, every $u\in K$ is incident with a cross edge. The blocks \eqref{eq:blocks} partition $\{1,\dots,m\}$ and each class is labelled by
its block, so $\tau$ is a bijection and $D$ orients every edge once. By Lemma~\ref{lem:surplus} the
outside sums are pairwise distinct and at least $1$. Two statements remain.

\smallskip
\noindent\emph{(a) Clique sums are pairwise distinct.} By Step~3, $s(u_r)=\sK(u_r)-B_{u_r}$
with $B_{u_r}$ nonincreasing and $\sK(u_r)$ strictly increasing, so $s(u_r)$ strictly increases.
Only distinctness of the clique sums and the opposite pairing are used; loads need not be distinct.

\smallskip
\noindent\emph{(b) Every clique sum is below every outside sum.} Since each $u\in K$ has a cross edge, its load contains a cross
label, and by \eqref{eq:blocks} every cross label is at least $\mK+\mA+1$, so $B_u\ge\mK+\mA+1$; with $\sK(u)\le\mK$,
\[
s(u)=\sK(u)-B_u\le\mK-(\mK+\mA+1)=-\mA-1\le-1<0<1\le s(v).
\]
Statement (b) puts the clique sums strictly below $1$ while the outside sums are at least $1$, so
with (a) all $k+|A|$ sums are distinct, and $(D,\tau)$ is antimagic.
\end{proof}

\subsection{Proof of Theorem~\ref{thm:full}}\label{sec:reduction}

To remove the neighbour hypothesis on $K$ it suffices to choose the dominating clique as small as possible.

\begin{proof}[Proof of Theorem~\ref{thm:full}]
If $G$ has no edge then $G$ is a single vertex, since in an edgeless graph a clique has at most one
vertex and can dominate no other vertex; the empty orientation of a single vertex is trivially
antimagic. Assume henceforth that $G$ has an edge. Among all dominating cliques of $G$
choose one, $K$, of minimum order $k\ge1$, and set $A=V(G)\setminus K$; we show that every vertex of
$K$ has a neighbour in $A$. If $k=1$, write $K=\{u\}$; since $G$ has an edge, it has a vertex $w\ne u$, and $w$ is
adjacent to $u$ because $K$ dominates, so $u$ has the neighbour $w\in A$. If $k\ge2$, suppose some
$u\in K$ had no neighbour in $A$, so that all neighbours of $u$ lie in $K$. Then $\Kp:=K\setminus\{u\}$
is a clique of order $k-1\ge1$ that still dominates $G$: every $x\in A$ has a neighbour in $K$, and
that neighbour is not $u$ (as $u$ has no neighbour in $A$), so it lies in $\Kp$, while $u$ is adjacent
to every vertex of $\Kp$. This contradicts the minimality of $K$. Hence every vertex of $K$ has a
neighbour in $A$, and Theorem~\ref{thm:engine} yields an antimagic orientation of $G$.
\end{proof}

\section*{Acknowledgement}

The author would like to thank Zelealem B. Yilma for suggesting the problem and for his guidance and
many helpful comments throughout this work.

\bibliographystyle{plain}
\bibliography{references}

\end{document}